\newcommand{\set}[1]{\left\{#1\right\}}
\newcommand{\abs}[1]{\left|#1\right|}
\newcommand{\Nat}{\mathbb{N}}
\newcommand{\Int}{\mathbb{Z}}
\newcommand{\CB}{\mathcal{B}}
\newcommand{\CC}{\mathcal{C}}
\newcommand{\CM}{\mathcal{M}}
\newcommand{\CT}{\mathcal{T}}
\newcommand{\CS}{\mathcal{S}}
\newcommand{\Bor}{\mathit{Bor}}
\newcommand{\fr}{\mathsf{fr}}
\newcommand{\Fo}{F{\o}lner }
\newcommand{\sd}{\bigtriangleup}
\newcommand{\eps}{\varepsilon}
\theoremstyle{plain}
\newtheorem{thm}{Theorem}[section]
\newtheorem{cor}[thm]{Corollary}
\newtheorem{lem}[thm]{Lemma}
\newtheorem{prop}[thm]{Proposition}
\newtheorem{defn}[thm]{Definition}
\newtheorem{rem}[thm]{Remark}
\newtheorem{fct}[thm]{Fact}
\numberwithin{equation}{section}
\renewcommand{\thefootnote}{}
\title{Faces of simplices of invariant measures for actions of amenable groups}
\author{Bartosz Frej, Dawid Huczek}
\affil{\textit{Faculty of Pure and Applied Mathematics,
Wroc{\l}aw University of Science and Technology, 
Wybrze\.{z}e Wyspia\'{n}skiego 27,
50-370 Wroc{\l}aw, Poland}\\
\texttt{Bartosz.Frej@pwr.edu.pl, Dawid.Huczek@pwr.edu.pl}}
\date{}
\begin{document}

\maketitle 

\renewcommand{\thefootnote}{}
\footnote{2010 \emph{Mathematics Subject Classification}: Primary 37A35; Secondary 37B40.}
\footnote{\emph{Key words and phrases}: invariant measure, periodic measure, block, Choquet simplex, amenable group, group action, symbolic system, Cantor space.}
\footnote{Research of both authors is supported from resources for science in years 2013-2018
as research project (NCN grant 2013/08/A/ST1/00275, Poland).}

\renewcommand{\thefootnote}{\arabic{footnote}}
\setcounter{footnote}{0}

\abstract{We extend the result of \cite{D2} to the case of amenable group actions, by showing that every face in the simplex of invariant measures on a zero-dimensional dynamical system with free action of an amenable group $G$ can be modeled as the entire simplex of invariant measures on some other zero-dimensional dynamical system with free action of $G$. This is a continuation of our investigations from \cite{FH}, inspired by an earlier paper \cite{D1}.}

\section{Introduction} 
Let $X$ be a Cantor space i.e., a compact, metrizable, zero-dimensional perfect space, and let $G$ be a countable amenable group acting on $X$ via homeomorphisms $\varphi_g, g\in G$. Amenability of $G$ means that there exists a sequence of finite sets $F_n\subset G$ (called a \emph{F\o lner sequence}, or the sequence of F\o lner sets), such that for any $g\in G$ we have
\[\lim_{n\to\infty}\frac{\abs{gF_n\sd F_n}}{\abs{F_n}}= 0,\]
where $gF=\set{gf:f\in F}$, $\abs{\cdot}$ denotes the cardinality of a set, and $\sd$ is the symmetric difference. 
The action of $G$ is \emph{free} if the equality $gx=x$ for any $g\in G$ and $x\in X$ implies that $g$ is the neutral element of $G$.
It is well known that one can represent the system $(X,G)$ as an inverse limit $\lim\limits_{\leftarrow} X_j\subset \prod_{j\in\Nat}X_j$ where each $X_j$ is a group subshift on finitely many symbols i.e. a subset of some ${\Lambda_j}^G$, $|\Lambda_j|<\infty$, with the action defined by $gx(h)=x(hg)$. 
We will often refer to this inverse limit as a so called array system --- an element of $X$ in this interpretation is a map $x(\cdot,\cdot)$ on $G\times \Nat$, where $x(\cdot,j)\in X_j$. We will call such a map an array and from now on we will assume that our system is in array representation. By an $(F,k)$-block  we mean a map $B\colon F\times[1,k]\to\bigcup_j\Lambda_j$, where $F$ is a finite subset of $G$ (which will occasionally be called the \emph{shape} of a block), $k$ is a positive integer and $[1,k]$ is an abbreviation for $\{1,...,k\}$. If $E$ is a subset of the domain of a block $B$ then by $B[E]$ we will denote a restriction of $B$ to $E$. By abuse of the notation, we will mean by $|B|$ the cardinality of the shape of $B$. We will  use the same letter to denote both a block and a cylinder set induced by this block---the exact meaning is always clear from the context. A block $B$ \emph{occurs in $X$} if $B$ is a restriction of some $x\in X$.

Let $K$ be an abstract metrizable Choquet simplex.
\begin{defn}

\begin{enumerate}
\item An \emph{assignment} on $K$ is a function $\Phi$ defined on $K$ such that 
for each $p\in K$, the value of $\Phi(p)$ is a measure-preserving group action $(X_p,\Sigma_p,\mu_p,G_p)$, where $(X_p,\Sigma_p,\mu_p)$ is a standard
probability space. 

\item Two assignments $\Phi$ on $K$ and $\Phi'$ on $K'$  are \emph{equivalent} if there exists an affine
homeomorphism $\pi: K \to K'$ such that $\Phi(p)$ and $\Phi'(\pi(p))$ are isomorphic for every $p \in K$.

\item If $(X, G)$ is a continuous group action on a compact metric space $X$ then the set of all $G$-invariant measures supported
by $X$, endowed with the weak* topology of measures, is a Choquet simplex, and the
assignment by identity $\Phi(\mu) = (X,\Bor_X, \mu, G)$ (where $\Bor_X$ is the Borel sigma-field) is
 \emph{the natural assignment} of $(X,G)$.
\end{enumerate}
\end{defn}

In the current article we aim to prove the following:
\begin{thm}\label{thm:main}
Let $X$ be a Cantor system with free action of an amenable group $G$ and let $K$ be a face in the simplex $\CM_G(X)$ of $G$-invariant measures of $X$. There exists a Cantor system $Y$ with free action of $G$, such that the natural assignment on $Y$ is equivalent to the identity assignment on $K$.
\end{thm}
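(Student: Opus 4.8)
The plan is to realize $K$ not as a subsystem of $X$ but as a separate symbolic array model $Y$ that is measure-theoretically faithful to $X$ along every measure of $K$, while carrying no invariant measure lying outside $K$. The first step is to encode the face combinatorially. In the array representation each invariant measure $\mu$ is determined by the frequencies $\mu(B)$ of blocks $B$, and by the pointwise ergodic theorem for amenable groups (along a tempered subsequence of $(F_n)$) $\mu$-almost every point $x$ has its empirical block-frequencies along $F_n$ converging to $\mu(B)$. Using the ergodic decomposition I would describe $K$ through its extreme points, a set $\mathcal{E}_K$ of ergodic measures: $\mu\in K$ exactly when the ergodic decomposition of $\mu$ is concentrated on $\mathcal{E}_K$, so that membership of the quasi-generic measures of a point in $\overline{K}$ becomes a condition expressible, up to arbitrarily small error, through finitely many block frequencies at each scale.

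Next I would construct $Y$ through a refining sequence of block codings, in the spirit of the machinery of \cite{D2} and \cite{FH}. At scale $n$ I tile large \Fo sets using the (approximately exact) \Fo tilings available for amenable groups, and recode the $X$-array inside each tile, retaining enough information to reconstruct $X$ off a set of measure $<\eps_n$; in the inverse limit this makes $Y$ measure-isomorphic to $X$ for every $\mu\in K$ and induces the required affine homeomorphism $\pi$ on invariant measures. Simultaneously I would admit into $Y$ only those codings whose tile statistics are compatible with $\overline{K}$, forbidding any block that would force a quasi-generic measure out of $\overline{K}$. Faithfulness of the recoding guarantees $\pi(\mu)\in\CM_G(Y)$ for $\mu\in K$, while the admissibility constraint forces the $X$-measure decoded from any invariant measure of $Y$ to have all its quasi-generic measures, hence its whole ergodic decomposition, inside the closed face $\overline{K}$.

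To descend from $\overline{K}$ to $K$ itself, and thereby to treat non-closed faces, I would exhaust $\mathcal{E}_K$ from inside by a family of closed sets of ergodic measures, build the corresponding closed-face models, and assemble $Y$ as a telescoped inverse limit arranged so that precisely the measures concentrated on $\mathcal{E}_K$ persist. Finally I would check that $Y$ is a Cantor space, that the $G$-action is free (freeness on $X$ together with the faithfulness of the coding should transfer, after adjoining a free marker coordinate if necessary), and that the natural assignment of $Y$ is equivalent via $\pi$ to the identity assignment on $K$. I expect the main obstacle to be the combinatorial core of the second paragraph: producing a single coding that is at once measure-faithful enough to preserve the isomorphism type of $(X,\mu,G)$ for all $\mu\in K$ simultaneously, yet rigid enough that no invariant measure outside $\overline{K}$ survives, all while respecting the overlap combinatorics of \Fo tilings and keeping the action free. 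The delicate interplay of these competing requirements, rather than any single estimate, is where the real work lies; the passage from the closed face $\overline{K}$ to an arbitrary face $K$ through a uniform inner exhaustion of $\mathcal{E}_K$ is the second sensitive point.
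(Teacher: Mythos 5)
There is a genuine gap, and it sits exactly where you predict it does: the ``delicate interplay'' you defer is the entire theorem, and the mechanism you propose for it --- admitting into $Y$ only codings compatible with $\overline{K}$ and \emph{forbidding} the offending blocks --- cannot work as stated. The reason is quantitative: for $\mu\in K$, points that are $\mu$-generic will in general still contain, with small but \emph{positive} frequency, blocks whose empirical statistics are far from $K$ (the paper's Lemma \ref{close_blocks} only bounds the total mass of such blocks by $\eps$, it does not make it zero). So if $Y$ excludes those blocks, generic points of $X$ admit no natural image in $Y$ and measure-faithfulness collapses; if $Y$ keeps them, invariant measures outside $K$ survive. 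The paper resolves this tension by \emph{rewriting} rather than forbidding: it builds invertible coding maps $\tilde\phi_t$ which, on each tile $T$ of a dynamical quasitiling $\CT_t$ congruent with $\CT_{t-1}$, replace the block sitting there by a fixed good block $B_S$ \emph{only when} that block is $\delta_t$-far from (the image of) $K$, and leave it untouched otherwise. Each $\phi_t=\tilde\phi_t\circ\phi_{t-1}$ is a conjugacy of $X$ onto $X_t=\phi_t(X)$, and $Y$ is defined as the topological limit $\bigcap_{s}\overline{\bigcup_{t\geq s}X_t}$, not by a list of forbidden blocks. Lemma \ref{close_blocks} --- whose proof is where the face property of $K$ actually enters, via a barycenter argument: a measure in $K$ cannot be the barycenter of a distribution giving mass $\geq\eps$ to measures far from $K$ --- guarantees that for every $\mu\in K$ the rewritten coordinates have density less than $\eps_t$; summability of $\eps_t$ plus Borel--Cantelli then gives a.e.\ convergence of $\phi_t$ to an isomorphism $(X,\mu)\to(Y,\Phi(\mu))$, while uniform convergence of $\Phi_t$ on $K$ identifies $\CM_G(Y)$ with $\Phi(K)$. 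Nothing in your sketch produces this simultaneous ``small density of changes for all $\mu\in K$'' estimate, which is the single fact that reconciles faithfulness with rigidity.

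Two further points. First, the toolkit your second paragraph silently assumes --- a meaningful notion of the measure $\mu_B$ determined by a block $B$ on a F{\o}lner-invariant shape, approximation of ergodic measures by such block measures, and a concatenation lemma saying that the measure of a large block is close to the tile-weighted average of the measures of its sub-blocks over a quasitiling --- is not available off the shelf for amenable $G$; constructing it is the content of the paper's Section 2 (Lemmas \ref{block_measure}, \ref{concatenation}, Corollary \ref{approx_ergodic}), and without it the phrase ``tile statistics compatible with $\overline{K}$'' has no defined meaning. Second, your reduction of $K$ to its set of extreme points $\mathcal{E}_K$ and the closing inner-exhaustion/inverse-limit step for passing from $\overline{K}$ to $K$ are not needed and are themselves unsubstantiated: the paper never builds a model of $\overline{K}$ at all, and it is far from clear that membership in a (possibly non-closed) face is detectable scale-by-scale from finitely many block frequencies, or that the proposed telescoping would kill exactly the measures carried by $\overline{K}\setminus K$. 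The paper instead works with $K$ directly throughout, using the face property only once, inside Lemma \ref{close_blocks}, in the form quoted above.
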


In case of actions of $\Int$ the theorem was proved in \cite{D2} and the key tool used there was approximation of an arbitrary ergodic measure by a block (periodic) measure. 
Density of periodic measures in the set of all invariant measures is usually a desired property and was proved to be true in various cases, e.g. for systems with specification property (see \cite{DGS}). In case of a one-dimensional subshift one can construct a periodic measure by choosing a block $B$ occurring in a system and uniformly distributing a probability mass on the orbit of a sequence obtained by periodic repetitions of $B$. Such a sequence need not be an element of a subshift (and the measure need not belong to its simplex of invariant measures), still it may give a useful approximation of a measure under consideration. For actions of groups other than $\Int$ (even $\Int^d$) this procedure usually cannot be performed, roughly saying, because of irregular shapes of blocks, and the notion of a block measure seems to be obscure. We devote the next section to implementing it in our setup, but before we proceed, we recall a few facts about F\o lner sequences.

In any amenable group there exists a F\o lner sequence with the following additional properties (see \cite{E}):
\begin{enumerate}
\item $F_n\subset F_{n+1}$ for all $n$, 
\item $e\in F_n$ for all $n$ ($e$ denotes the neutral element of $G$),
\item $\bigcup_{n\in \Nat} F_n=G$,
\item $F_n=F_n^{-1}$ for all $n$.
\end{enumerate}
A F\o lner sequence $F_n$ is \emph{tempered} if for some $C>0$ and all $n$,
\[
\abs{\bigcup_{k\leq n} F_k^{-1}F_{n+1}} \leq C|F_{n+1}|.
\]
\begin{prop}[\cite{L}]
Every F\o lner sequence $F_n$ has a tempered subsequence.
\end{prop}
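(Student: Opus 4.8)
The plan is to extract the tempered subsequence by a greedy, one-term-at-a-time construction, exploiting near-invariance of \Fo sets under finite multiplier sets. The guiding observation is that temperedness is a condition imposed separately at each level: relabelling a subsequence as $\bigl(F_{n_j}\bigr)_{j\ge1}$, the requirement becomes
\[
\abs{\bigcup_{i\le j}F_{n_i}^{-1}F_{n_{j+1}}}\le C\abs{F_{n_{j+1}}}\qquad\text{for every }j,
\]
with one uniform $C$. Since any subsequence of a \Fo sequence is again \Fo, only this inequality needs to be secured. The point is that once $F_{n_1},\dots,F_{n_j}$ have been chosen, the set $T_j:=\bigcup_{i\le j}F_{n_i}^{-1}$ is a \emph{fixed finite} subset of $G$, and the union on the left is exactly $T_jF_{n_{j+1}}$; so the task at each step reduces to choosing the next \Fo set large enough to be almost invariant under the finite set $T_j$.

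The tool for this is the elementary bound, valid for any finite $T\subset G$,
\[
\abs{TF_n}\le\abs{F_n}+\sum_{g\in T}\abs{gF_n\sd F_n},
\]
which follows from $TF_n\setminus F_n\subseteq\bigcup_{g\in T}\bigl(gF_n\setminus F_n\bigr)$. Dividing by $\abs{F_n}$ and applying the \Fo convergence to each of the finitely many summands gives $\abs{TF_n}/\abs{F_n}\to1$ as $n\to\infty$; in particular $\abs{TF_n}\le 2\abs{F_n}$ for all sufficiently large $n$.

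With this the induction is immediate: set $n_1=1$, and having chosen $n_1<\dots<n_j$, apply the bound to $T_j$ to find $n_{j+1}>n_j$ with $\abs{T_jF_{n_{j+1}}}\le 2\abs{F_{n_{j+1}}}$. Then $\abs{\bigcup_{i\le j}F_{n_i}^{-1}F_{n_{j+1}}}=\abs{T_jF_{n_{j+1}}}\le 2\abs{F_{n_{j+1}}}$ for every $j$, so the subsequence is tempered with $C=2$. I do not expect a real obstacle here, only bookkeeping: the one thing that might look worrying is that $T_j$ grows with $j$ while $C$ must stay fixed, but this dissolves because at each stage we re-apply the \Fo limit to the current (still finite) $T_j$ and always aim for the same factor, so the constants do not accumulate. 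Aiming instead for $1+\eps_j$ with $\eps_j\to0$ costs nothing and yields temperedness with any prescribed $C>1$.
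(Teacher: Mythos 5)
Your proof is correct: the identity $\bigcup_{i\le j}F_{n_i}^{-1}F_{n_{j+1}}=T_jF_{n_{j+1}}$, the covering bound $\abs{TF_n}\le\abs{F_n}+\sum_{g\in T}\abs{gF_n\sd F_n}$, and the greedy choice with a fixed target factor $2$ together give a tempered subsequence with $C=2$, and no constants accumulate. The paper itself gives no proof (it cites Lindenstrauss), and your argument is essentially the standard one from that source, so there is nothing to reconcile.
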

Throughout this paper, we will assume that the F\o lner sequence which we use is tempered and has all the above properties. We recall the pointwise ergodic theorem for amenable groups.

\begin{thm}[\cite{L}] \label{ergodic_thm}
Let $G$ be an amenable group acting
ergodically on a measure space $(X, \mu)$, and let $F_n$ be a tempered F\o lner sequence.
Then for any $f \in L^1(\mu)$,
\[
\lim_{n\to\infty} \frac1{|F_n|}\sum_{g\in F_n} f(gx) = \int f \,d\mu\qquad a.e.
\]
\end{thm}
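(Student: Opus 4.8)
The plan is to reduce everything to a maximal inequality, because the amenable pointwise theorem follows from a weak-type $(1,1)$ bound for the maximal averaging operator together with a.e.\ convergence on an $L^1$-dense class. Writing $A_nf(x)=\frac1{|F_n|}\sum_{g\in F_n}f(gx)$ and
\[
Mf(x)=\sup_n A_n(|f|)(x),
\]
the analytic heart is the estimate $\mu\{Mf>\lambda\}\le C\lambda^{-1}\|f\|_1$ for $f\in L^1(\mu)$, with $C$ the temperedness constant.

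Granting the maximal inequality, I would conclude as follows. The inequality forces the set of $f\in L^1(\mu)$ for which the oscillation $\limsup_nA_nf-\liminf_nA_nf$ vanishes a.e.\ to be $L^1$-closed, so it suffices to verify a.e.\ convergence on a dense class. I would use the span of constants and ``F\o lner coboundaries'' $h-U_gh$ with $h\in L^\infty(\mu)$ and $U_gh(x)=h(gx)$: for a constant the averages are constant, while for a coboundary
\[
A_n(h-U_gh)=\frac1{|F_n|}\Bigl(\sum_{g'\in F_n}-\sum_{g'\in gF_n}\Bigr)h(g'x)
\]
has absolute value at most $\frac{|gF_n\sd F_n|}{|F_n|}\|h\|_\infty$, which tends to $0$ by the F\o lner property. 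By the von Neumann mean ergodic theorem $L^2(\mu)$ is the orthogonal sum of the $G$-invariant functions and the closure of the coboundaries, and ergodicity collapses the invariant part to the constants; hence this class is dense in $L^2(\mu)$, and so in $L^1(\mu)$. The a.e.\ limit is identified as $\int f\,d\mu$ because it equals $\int f\,d\mu$ on the dense class and the maximal inequality propagates this identity to every $f\in L^1(\mu)$.

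The substance is therefore the maximal inequality, which I would obtain by transference from a purely combinatorial covering statement inside $G$. Transference in the style of Calder\'on replaces the dynamical averages by averages of the sampled function $g\mapsto f(gx)$ over translates of F\o lner sets in the group, so that a maximal inequality for the group-theoretic averaging operator, integrated against $\mu$ and using invariance of $\mu$, yields the dynamical one. The group maximal inequality rests in turn on a Vitali-type covering lemma: every finite family of ``tiles'' $F_{n_i}c_i$ (translates of F\o lner sets) admits a subfamily of bounded overlap that still covers a fixed proportion of the union $\bigcup_i F_{n_i}c_i$.

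The hard part, and the reason temperedness is indispensable, is this covering lemma. For $\Int$ or $\Int^d$ one has genuine geometric Vitali lemmas, but in a general amenable group the F\o lner sets may be arbitrarily irregular and a naive greedy selection need not control the overlaps. I would follow Lindenstrauss's randomized selection: process the tiles in order of decreasing scale, retain each surviving tile independently with a suitable probability, and discard those already substantially covered by previously retained tiles. The temperedness inequality $|\bigcup_{k\le n}F_k^{-1}F_{n+1}|\le C|F_{n+1}|$ is exactly the bound that controls, in expectation, the overlap between tiles of different scales, giving simultaneously a definite expected coverage and an $O(1)$ expected multiplicity. Averaging then produces a deterministic selection with the desired properties, and it is here that essentially all the difficulty of the theorem resides.
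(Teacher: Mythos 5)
The paper does not prove this statement at all: it is Theorem \ref{ergodic_thm} quoted directly from Lindenstrauss \cite{L}, so there is no internal argument to compare yours against; the only meaningful comparison is with the proof in \cite{L} itself. Measured against that, your outline is faithful to the real thing: the reduction to a weak-type $(1,1)$ maximal inequality via the Banach principle, the dense class consisting of constants and bounded coboundaries (with the telescoping estimate $\abs{A_n(h-U_gh)}\leq \frac{\abs{gF_n\sd F_n}}{\abs{F_n}}\|h\|_\infty$ and the mean ergodic theorem plus ergodicity supplying $L^2$-density, hence $L^1$-density), Calder\'on-type transference to a purely group-theoretic maximal operator, and a Vitali-type covering selection controlled by the temperedness inequality $\abs{\bigcup_{k\leq n}F_k^{-1}F_{n+1}}\leq C\abs{F_{n+1}}$ --- these are exactly the components of Lindenstrauss's argument, assembled in the correct order, and each soft step you do spell out (closedness of the convergence set under the maximal inequality, vanishing of averages on coboundaries, identification of the limit) is correct.

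Nevertheless, as a proof your text has a genuine gap, and you locate it yourself: everything is conditional on the covering lemma and the resulting maximal inequality, which you describe (``process tiles in decreasing scale, retain independently with suitable probability, discard tiles substantially covered by previously retained ones'') but never establish. This is not a verification one can leave to the reader: the quantitative formulation --- what the retention probabilities are, what bounded expected multiplicity means, how the constant $C$ from temperedness enters the expected-coverage estimate, and how one extracts a deterministic selection from the expectation bounds --- is precisely where all the difficulty of \cite{L} resides, and without it nothing upstream produces the theorem. So your proposal should be read as an accurate road map of the known proof, with the soft reductions done correctly and the hard analytic core named but absent.
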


If $F$ and $A$ are finite subsets of $G$ and $0<\delta<1$, we say that $F$ is \emph{$(A,\delta)$-invariant} if
\[\frac{\abs{F\sd AF}}{\abs{F}}< \delta,\]
where $AF=\set{af:a \in A,f\in F}$. Observe that if $A$ contains the neutral element of $G$, then $(A,\delta)$-invariance is equivalent to the simpler condition
\[\abs{AF}< (1+\delta)\abs{F}.\]
It is not hard to observe that if $F$ is $(A,\delta)$-invariant then
\[
|\{f\in F: Af\cap F^c\not=\emptyset\}| < \delta|A||F|.
\]

If $(F_n)$ is a F\o lner sequence, then for every finite $A \subset G$ and 
every $\delta>0$ there exists an $N$ such that for $n>N$ the sets $F_n$ are 
$(A,\delta)$-invariant.

\begin{defn}
	For $S\subset G$ and a finite, nonempty $F\subset G$ denote
	\[
	\underline D_F(S)=\inf_{g\in G} \frac{|S\cap Fg|}{|F|}, \ \ \ \overline 
	D_F(S)=\sup_{g\in G} \frac{|S\cap Fg|}{|F|}.
	\]
	If $(F_n)$ is a F\o lner sequence then we define two values
	\[
	\underline D(S)=\limsup_{n\to\infty} \underline D_{F_n}(S) \ \ \ \text{ and 
	} \ \ \ \overline D(S)=\liminf_{n\to\infty} \overline D_{F_n}(S),
	\]
	which we call the \emph{lower} and \emph{upper} \emph{Banach densities} of 
	$S$, respectively.
\end{defn}
Note that $\overline D(S) = 1- \underline D(G\setminus S)$. We recall the 
following standard fact:

\begin{fct}\label{bd}
	Regardless of the set $S$, the values of $\underline D(S)$ and $\overline 
	D(S)$ do not depend on the
	F\o lner sequence, the limits superior and inferior in the definition are 
	in fact limits, and moreover
	\begin{align*}
		\underline D(S) &= \sup\{\underline D_F(S): F\subset G, F \text{ is 
		finite}\}\ \ \ \text{ and } \\
		\overline D(S) &= \,\inf\,\{\overline D_F(S): F\subset G, F \text{ is 
		finite}\}\ge \underline D(S).
	\end{align*}
\end{fct}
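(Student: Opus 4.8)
The plan is to establish every claim first for the lower density $\underline D$ and then transfer the statements about $\overline D$ through the duality $\overline D_F(S) = 1 - \underline D_F(G\setminus S)$, which follows at once from $\abs{S\cap Fg} = \abs{F} - \abs{(G\setminus S)\cap Fg}$. Set $\alpha = \sup\set{\underline D_F(S): F\subset G \text{ finite, nonempty}}$. One direction is immediate: each $F_n$ is itself one of the admissible finite sets, so $\underline D_{F_n}(S)\le\alpha$ for all $n$, whence $\limsup_n \underline D_{F_n}(S)\le\alpha$. The whole content of the fact therefore rests on the reverse inequality $\liminf_n \underline D_{F_n}(S)\ge\alpha$ for an \emph{arbitrary} Følner sequence; once this is in hand, the limit exists, equals $\alpha$, and is manifestly independent of the sequence, giving simultaneously the ``limits are limits'' and ``independence'' assertions.

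For the reverse inequality I would fix $\eps>0$ and choose a finite $F$ with $\underline D_F(S) > \alpha - \eps$, so that $\abs{S\cap Fg} > (\alpha-\eps)\abs{F}$ for every $g\in G$. The key step is a double-counting estimate. Given a finite nonempty $H$, write $H^- = \set{g\in G: Fg\subseteq H}$ for its $F$-interior. Each $x\in G$ lies in exactly $\abs{F}$ of the translates $\set{Fg:g\in G}$, namely those with $g\in F^{-1}x$, so summing $\abs{S\cap Fg}$ over $g\in H^-$ counts every point of $S\cap H$ at most $\abs{F}$ times; comparing this with the lower bound on each summand yields
\[
(\alpha-\eps)\abs{F}\,\abs{H^-} \le \sum_{g\in H^-}\abs{S\cap Fg} \le \abs{F}\,\abs{S\cap H},
\]
that is, $\abs{S\cap H}\ge(\alpha-\eps)\abs{H^-}$.

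The remaining ingredient is to upgrade this to a bound on $\underline D_H(S)$, which is an infimum over all right translates $Hg_0$. Right translation preserves cardinalities, symmetric differences, and the containment defining the interior, so $(Hg_0)^- = H^- g_0$ and the displayed inequality holds verbatim with $H$ replaced by $Hg_0$; taking the infimum over $g_0$ gives $\underline D_H(S)\ge(\alpha-\eps)\abs{H^-}/\abs{H}$. Now if $H$ is $(F,\delta)$-invariant then, by the quoted estimate on the number of $h\in H$ with $Fh\cap H^c\neq\emptyset$, we have $\abs{H^-}\ge(1-\delta\abs{F})\abs{H}$, hence $\underline D_H(S)\ge(\alpha-\eps)(1-\delta\abs{F})$. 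Since any Følner sequence is eventually $(F,\delta)$-invariant for each fixed $\delta$, letting $n\to\infty$ and then $\delta,\eps\to 0$ yields $\liminf_n\underline D_{F_n}(S)\ge\alpha$, finishing the lower-density case; the three $\overline D$ assertions then drop out of the duality applied to $G\setminus S$. The final inequality $\overline D(S)\ge\underline D(S)$ requires no separate argument: along one Følner sequence $\underline D_{F_n}(S)\le\overline D_{F_n}(S)$ term by term, as an infimum never exceeds the corresponding supremum, and passing to the limits just established preserves the inequality. I expect the double-counting estimate, together with the care needed to promote the bound on $\abs{S\cap H}/\abs{H}$ to a bound on $\underline D_H(S)$ uniform over all translates, to be the only substantive point; the duality and the closing inequality are then pure bookkeeping.
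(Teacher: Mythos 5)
The paper offers no proof of this fact at all --- it is quoted as a standard result with a reference to the literature --- so there is nothing internal to compare your argument against; it must stand on its own, and it does. Your reduction of everything to the single inequality $\liminf_n \underline D_{F_n}(S)\ge\alpha$, where $\alpha=\sup_F\underline D_F(S)$, is the right skeleton: the opposite inequality is trivial, and the two together simultaneously yield existence of the limits, independence of the F\o lner sequence, and the identification with the supremum. The substantive steps all check out. In the double count, each $x\in S\cap H$ lies in $Fg$ for at most $|F|$ of the $g\in H^-$ (such $g$ lie in $F^{-1}x$), and $S\cap Fg\subseteq S\cap H$ for $g\in H^-$, so $|S\cap H|\ge(\alpha-\eps)|H^-|$ indeed follows; the identity $(Hg_0)^-=H^-g_0$ is what makes this bound uniform over right translates, which is exactly what the infimum defining $\underline D_H$ requires, and you were right to flag that promotion as the delicate point; the bound $|H^-|\ge(1-\delta|F|)|H|$ for $(F,\delta)$-invariant $H$ is precisely the estimate quoted in the paper's introduction; and the order of limits ($n\to\infty$, then $\delta\to 0$, then $\eps\to 0$, with $F$ fixed in terms of $\eps$) is handled correctly. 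The duality $\overline D_F(S)=1-\underline D_F(G\setminus S)$, already noted in the paper, legitimately transfers all three assertions to $\overline D$, and the closing inequality $\overline D(S)\ge\underline D(S)$ is immediate termwise. This is essentially the standard argument for this fact, and it supplies cleanly what the paper leaves to the reader.
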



\section{Block measures} 

We will explicitly define a metric consistent with the weak* topology on the set of probability measures on $X$, represented as an array system. First, let $\CB_k$ be the family of all blocks with domain $F_k\times[1,k]$, occurring in $X$, and let 
\[d_k(\mu,\nu)=\frac{1}{\abs{\CB_k}}\sum_{B\in\CB_k}\abs{\mu(B)-\nu(B)}.\]
Now let
\[d(\mu,\nu)=\sum_{k=1}^\infty\frac{1}{2^k}d_k(\mu,\nu).\]
Note that we may assume that $\CB_k$ consists only of blocks which yield cylinders of positive measure for some ergodic measure $\mu$.

For the sake of convenience, we introduce a notion of ``distance'' between a block and a measure.
Let $B$ be a block occurring in $X$, with domain $F\times[1,k]$ for some $F\subset G$ and $k\in \Nat$. For any block $C$ with domain $F_j\times[1,j]$, where $j\leq k$, we can define the frequency of $C$ in $B$ in the following way: let 
\begin{gather*}
N_F(F_j)=\abs{\set{g\in F:F_jg\subset F}}\\
N_B(C) = \abs{\set{g\in F: F_jg\subset F \text{ and }B[F_jg\times[1,j]]=C}}
\end{gather*}
and if $N_F(F_j)>0$ let 
\[
\fr_B(C)=\frac{N_B(C)}{N_F(F_j)}.
\]
Otherwise let $\fr_B(C)=0.$ 

We say that $A$ is a \emph{$(1-\delta)$-subset of $F$} if $A\subset F$ and $|A| \geq (1-\delta)|F|$. 
By a standard argument we can draw from the pointwise ergodic theorem \ref{ergodic_thm} the following corollary.
\begin{fct}	\label{pet_consequence}  
Let $\mu$ be an ergodic measure on $X$.
For every $\eps$ and $j$ we can find $n$ and $\eta$ such that if $F$ is a $(1-\eta)$-subset of $F_m$, $m\geq n$, then for some block $C$ with domain $F\times [1,j]$ we have $\abs{\fr_C(D)-\mu(D)}<\eps$ for every block $D$ with domain 
$F_i\times[1,i]$, $i=1,\ldots,j$.
\end{fct}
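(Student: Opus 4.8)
The plan is to derive the statement from the pointwise ergodic theorem applied to the indicator functions of the finitely many cylinders involved. Fix $\eps$ and $j$ and let $\CD$ be the (finite) family of all blocks $D$ with domain $F_i\times[1,i]$, $i=1,\dots,j$, occurring in $X$; finiteness holds because each alphabet $\Lambda_i$ is finite. Since $\int\mathbf 1_D\,d\mu=\mu(D)$, Theorem \ref{ergodic_thm} gives a set of full $\mu$-measure on which $\frac1{\abs{F_m}}\sum_{g\in F_m}\mathbf 1_D(gx)\to\mu(D)$ simultaneously for all $D\in\CD$. Consequently the sets $A_N=\set{x:\abs{\frac1{\abs{F_m}}\sum_{g\in F_m}\mathbf 1_D(gx)-\mu(D)}<\eps/2\text{ for all }m\ge N,\ D\in\CD}$ increase to a set of full measure, so I may fix $N=n_1$ with $\mu(A_{n_1})>0$ and, in particular, choose one point $x\in A_{n_1}$. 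The desired block $C$ will simply be the restriction of this $x$ to $F\times[1,j]$, which automatically occurs in $X$.

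Next I would rewrite $\fr_C(D)$ as a Birkhoff average of $\mathbf 1_D$ along the orbit of $x$. Because the action is $gx(h)=x(hg)$, for $g$ with $F_ig\subset F$ the equality $C[F_ig\times[1,i]]=D$ says exactly that $x(hg,l)=D(h,l)$ for all $(h,l)\in F_i\times[1,i]$, i.e. that $gx\in D$. Thus, writing $G_F^{(i)}=\set{g\in F:F_ig\subset F}$, we have $N_C(D)=\sum_{g\in G_F^{(i)}}\mathbf 1_D(gx)$ and $N_F(F_i)=\abs{G_F^{(i)}}$, so that $\fr_C(D)$ is precisely the average of $\mathbf 1_D(gx)$ over the $F_i$-interior $G_F^{(i)}$ of $F$. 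It then remains only to compare this average over $G_F^{(i)}$ with the full average over $F_m$ that was controlled in the first step.

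That comparison rests on two boundary estimates. On one hand $\abs{F_m\setminus F}\le\eta\abs{F_m}$, since $F$ is a $(1-\eta)$-subset of $F_m$. On the other hand, choosing $n\ge n_1$ so large that $F_m$ is $(F_j,\delta)$-invariant for $m\ge n$ — hence $(F_i,\delta)$-invariant for each $i\le j$, because $e\in F_i\subset F_j$ — the set of $g\in F$ with $F_ig\not\subset F$ lies in $\set{g\in F_m:F_ig\cap F_m^c\ne\emptyset}\cup F_i(F_m\setminus F)$, and the invariance observation from Section 1 bounds its size by $(\delta+\eta)\abs{F_j}\abs{F_m}$. With $\beta=\eta+(\delta+\eta)\abs{F_j}$ this yields $\abs{F_m\setminus G_F^{(i)}}\le\beta\abs{F_m}$ and $\abs{G_F^{(i)}}\ge(1-\beta)\abs{F_m}$ for every $i\le j$, and a routine triangle inequality — the numerators $\sum_{G_F^{(i)}}\mathbf 1_D(gx)$ and $\sum_{F_m}\mathbf 1_D(gx)$ differ by at most $\beta\abs{F_m}$, as do the denominators $\abs{G_F^{(i)}}$ and $\abs{F_m}$ — bounds $\abs{\fr_C(D)-\mu(D)}$ by $\frac{2\beta}{1-\beta}+\eps/2$. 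Since $\abs{F_j}$ is fixed once $j$ is fixed, I can choose $\eta$ and $\delta$ small enough that $\frac{2\beta}{1-\beta}<\eps/2$, which completes the argument. The only real difficulty is the bookkeeping: one must control, by a single pair $(\eta,\delta)$ and uniformly in $i\le j$, the two separate boundary losses coming from replacing $F_m$ by its subset $F$ and from passing to the $F_i$-interior $G_F^{(i)}$; temperedness of $(F_n)$ enters only through the use of Theorem \ref{ergodic_thm}.
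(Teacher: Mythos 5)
Your proposal is correct and takes exactly the route the paper intends: the paper gives no detailed proof, saying only that the fact follows from the pointwise ergodic theorem ``by a standard argument,'' and your argument --- applying Theorem \ref{ergodic_thm} to the indicator functions of the finitely many cylinders $D$, choosing a generic point $x$ whose restriction furnishes the block $C$, and controlling the two boundary losses (from $F_m\setminus F$ and from passing to the $F_i$-interior) via F\o lner invariance and the symmetry $F_i=F_i^{-1}$ --- is precisely that standard argument, carried out correctly. No gaps.
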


We can now define the distance between a block and a measure: let
\[
d_k(B,\nu)=\frac{1}{\abs{\CB_k}}\sum_{D\in\CB_k}\abs{\fr_B(D)-\nu(D)},
\]
and let
\[
d(B,\nu)=\sum_{k=1}^\infty\frac{1}{2^k}d_k(B,\nu).
\]

\begin{rem} \label{freq_then_dist}
Let $\eps$ be a positive number.
To ensure that $d(B,\nu)<\eps$ it is enough to verify that if $j$ satisfies $\sum_{k=j+1}^\infty\frac{1}{2^k}<\frac{\eps}{2}$ then for any block $D\in \CB_i$, where $i=1,...,j$,
\[
\abs{\fr_B(D)-\nu(D)}<\frac{\eps}{2j}.
\]\end{rem}
Indeed, in this case we have $d_k(B,\nu)<\frac{\eps}{2j}$ and
\[
d(B,\nu)=\sum_{k=1}^j\frac{1}{2^k} d_k(B,\nu)+\sum_{k=j+1}^\infty \frac{1}{2^k}d_k(B,\nu)\\
<\sum_{k=1}^j\frac{\eps}{2j}+\sum_{k=j+1}^\infty\frac{1}{2^k}<\eps.
\]

\begin{lem}\label{block_measure}
For any $\eps>0$ and any positive integer $j$ there exists $\delta$ such that if $F$ is an $(F_j,\delta)$-invariant set and $B$ is a block with domain $F\times[1,j]$, then there exists a probability measure $\mu_B$ such that 
\[
\abs{\fr_B(D)-\mu_B(D)}<\frac{\eps}{2j}
\]
for any block $D\in \CB_i$, where $i=1,...,j$.

Consequently, for any $\eps>0$ and sufficiently large $j$ there exists $\delta$ such that if $F$ is an $(F_j,\delta)$-invariant set and $B$ is a block with domain $F\times[1,j]$, then there exists a probability measure $\mu_B$ such that $d(B,\mu_B)<\eps$.
\end{lem}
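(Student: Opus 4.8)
The plan is to realize $\mu_B$ as an empirical (orbit-average) measure built from a single point carrying the block $B$, and then to show that averaging over the ``deep interior'' of $F$ reproduces the frequencies $\fr_B(D)$ up to an error controlled by the invariance of $F$. Concretely, I would first fix a point $x$ whose restriction to $F\times[1,j]$ equals $B$ (such a point exists in the array space, and in $X$ itself whenever $B$ occurs in $X$), and set
\[
\mu_B=\frac{1}{\abs{F_0}}\sum_{g\in F_0}\delta_{gx},\qquad F_0=\set{g\in F:F_jg\subset F}.
\]
Because the action satisfies $gx(h)=x(hg)$, the translate $gx$ lies in the cylinder determined by a block $D$ of shape $F_i$ with $i\le j$ precisely when $B[F_ig\times[1,i]]=D$. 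Since $F_i\subset F_j$, for every $g\in F_0$ the relevant coordinates $F_ig$ lie inside $F$, so $\mu_B(D)$ depends only on $B$ and equals the number of such $g\in F_0$ divided by $\abs{F_0}$.

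The second step is to compare this count with $\fr_B(D)$. Writing $G_i=\set{g\in F:F_ig\subset F}$, the frequency $\fr_B(D)$ is a ratio whose denominator is $\abs{G_i}=N_F(F_i)$ and whose numerator counts good positions in $G_i$, whereas $\mu_B(D)$ is the analogous ratio over $G_j=F_0\subset G_i$. The only discrepancy between the two ratios comes from positions in $G_i\setminus G_j$, that is, from positions near the boundary of $F$ at which the largest shape $F_j$ fails to fit although $F_i$ does.

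The third step is the quantitative estimate of this boundary. Using the observation, noted earlier, that $(F_j,\delta)$-invariance of $F$ gives $\abs{\set{g\in F:F_jg\cap F^c\neq\emptyset}}<\delta\abs{F_j}\abs{F}$, and since $F_i\subset F_j$ the same bound controls $\abs{F\setminus G_i}$ for every $i\le j$. Setting $\delta'=\delta\abs{F_j}$, both numerators and both denominators of the two ratios differ by at most $\delta'\abs{F}$, and a short computation yields
\[
\abs{\fr_B(D)-\mu_B(D)}\le\frac{\delta'}{1-\delta'}.
\]
It then suffices to choose $\delta$ so small that $\delta\abs{F_j}/(1-\delta\abs{F_j})<\eps/(2j)$, which is possible since $j$, $\eps$, and $\abs{F_j}$ are fixed; this proves the first assertion. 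The ``consequently'' part follows at once: given $\eps$, pick $j$ so large that $\sum_{k>j}2^{-k}<\eps/2$, apply the first part with this $j$, and invoke Remark \ref{freq_then_dist} to conclude $d(B,\mu_B)<\eps$.

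I expect the main obstacle to be bookkeeping the dependence of $\delta$ on the cardinality $\abs{F_j}$: the naive boundary estimate loses a factor $\abs{F_j}$, so the invariance parameter must be taken small relative to $1/\abs{F_j}$ rather than merely small. The conceptual subtlety is that $\fr_B$ uses a shape-dependent normalization $N_F(F_i)$, so one must verify that passing to the single common index set $F_0$, where even the largest shape fits, perturbs every frequency by a uniformly small amount; the nestedness $F_i\subset F_j$ together with the invariance of $F$ are exactly what make this uniform across all $i\le j$ and all blocks $D$.
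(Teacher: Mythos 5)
Your proof is correct. The genuine difference from the paper lies in the construction of $\mu_B$: the paper defines $\mu_B$ as an absolutely continuous measure with respect to the uniform Bernoulli measure on $(\Lambda_1\times\cdots\times\Lambda_j)^G$, with density constant on cylinders of shape $F_j\times[1,j]$ and equal there to $\frac{1}{M_j}\fr_B(C)$, whereas you take the atomic empirical measure $\frac{1}{|F_0|}\sum_{g\in F_0}\delta_{gx}$ built from a single point extending $B$. These two measures in fact assign identical values to every cylinder $D$ of shape $F_i\times[1,i]$ with $i\le j$: the paper's value $\sum_{C\in\CC_j}\fr_B(C)$ equals the number of $g\in F_0$ with $B[F_ig\times[1,i]]=D$ divided by $|F_0|$, which is exactly your value. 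Consequently the analytic core---comparing the frequency of $D$ computed over the index set $F_0=\set{g\in F:F_jg\subset F}$ with $\fr_B(D)$, which is computed over the larger set $\set{g\in F:F_ig\subset F}$, using the boundary bound $\abs{F\setminus F_0}<\delta\abs{F_j}\abs{F}$---is the same in both proofs, and your bound $\delta'/(1-\delta')$ with $\delta'=\delta\abs{F_j}$ is consistent with the paper's $\frac{\delta\abs{F_j}}{1-\delta\abs{F_j}}$. Two minor points: your construction requires $F_0\neq\emptyset$, which does follow from $\delta\abs{F_j}<1$ (and is implicit in your denominator estimate, but deserves a word); and your $\mu_B$ depends on the arbitrary extension $x$, so it is not canonical in $B$---harmless for this purely existential lemma, though the later lemmas speak of ``the measure $\mu_B$'', so one should either fix a choice of extension per block or observe that only the cylinder values above are ever used. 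What your version buys is a more elementary object (no reference measure needed) which, when $B$ occurs in $X$, is supported on finitely many points of $X$ itself; what the paper's version buys is a measure canonically determined by $B$ alone.
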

\begin{proof}
Let  $\Delta_j=\Lambda_1\times...\times\Lambda_j$. The full shift $\Delta_j^G$ is a Cantor set on which we have the uniform Bernoulli probability measure $\lambda$ which assigns equal measures $M_j$ to all cylinders with domain $F_j\times[1,j]$. We shall define $\mu_B$ by specifying its density $f_B$ with respect to $\lambda$. $f_B$ will be constant on cylinders with domain $F_j\times[1,j]$: on each such cylinder associated with a block $C$ let $f_B(x)=\frac{1}{M_j}\fr_B(C)$. Obviously $d_j(\mu_B,B)=0$. 

We will now estimate $d_i(\mu_B,B)$ for $i<j$. Let $D$ be any block from $\CB_i$. Let $\CC_j$ be the family of (distinct) blocks from $\CB_j$ such that $D=\bigcup_{C\in\CC_j}C$. We have:
\[\mu_B(D)=\sum_{C\in\CC_j}\mu_B(C)=\sum_{C\in\CC_j}\fr_B(C),\]
and we need to show that the latter quantity is close to $\fr_B(D)$. 
Since $F$ is $(F_j,\delta)$-invariant, the set $\set{g:F_jg\subset F}$ is a 
$(1-\delta\abs{F_j})$-subset of $F$.
Consequently, the set $\set{g:F_ig\subset F}$ also is a 
$(1-\delta\abs{F_j})$-subset of $F$, being a superset of the former. 
For $C\in\CC_j$, let $F_C$ be the set of $g$ such that $F_jg\subset F$, $B[F_jg\times[1,j]]=C$ (and automatically $B[F_ig\times[1,i]]=D$). That way we can represent $\set{g:F_ig\subset F,B[F_ig\times[1,i]]=D}$ as the following disjoint sum:
\begin{multline*}
\set{g:F_ig\subset F,\ B[F_ig\times[1,i]]=D}=\\
=\bigcup_{C\in\CC_j}F_C \cup\set{g:F_ig\subset F,\ F_jg\cap 
F^c\not=\emptyset,\ B[F_ig\times[1,i]]=D}.
\end{multline*}
Taking cardinalities and dividing by $N_F(F_i)$, we obtain
\begin{multline*}
\fr_B(D)=\sum_{C\in\CC_j}\frac{N_F(F_j)}{N_F(F_i)}\fr_B(C)+\\
+\frac1{N_F(F_i)} \abs{\set{g:F_ig\subset F,\ F_jg\cap 
F^c\not=\emptyset,\ B[F_ig\times[1,i]]=D}}\\
\end{multline*}
Since both $F_j$ and $F_i$ are $(1-\delta\abs{F_j})$-subsets of $F$, we have  
$\frac{N_F(F_j)}{N_F(F_i)}\geq 1-\delta\abs{F_j}$ and 
\begin{multline*}
\frac1{N_F(F_i)} \abs{\set{g:F_ig\subset F,F_jg\cap 
F^c\not=\emptyset,\ B[F_ig\times[1,i]]=D}} \leq\\
\leq \frac1{N_F(F_i)} \abs{\set{g\in F:F_jg\cap F^c\not=\emptyset}} \\ 
\leq \frac{\delta\abs{F_j} |F|}{(1-\delta\abs{F_j})|F|} = 
\frac{\delta\abs{F_j}}{1-\delta\abs{F_j}}.
\end{multline*}
If $\delta$ is small enough then the expression can be arbitrarily close to $0$, while $\frac{N_F(F_j)}{N_F(F_i)}$ can be arbitrarily close to $1$, so we can assume that
\[\abs{\fr_B(D)-\mu_B(D)}=\abs{\fr_B(D)-\sum_{C\in\CC_j}\fr_B(C_j)}<\frac{\eps}{2j}.\]
The second assertion follows by remark \ref{freq_then_dist}.
\end{proof}
Note that in the above lemma $\delta$ may be as small as we want.

\begin{cor}	\label{approx_ergodic}
Let $X$ be a zero-dimensional dynamical system with the action of an amenable group $G$ and let $\mu$ be an ergodic measure on $X$. For any $\eps>0$ and any sufficiently large $j$ there exists $\delta>0$ and a block $C$ whose domain is a $(F_j,\delta)$-invariant set such that the measure $\mu_C$ (as defined in lemma \ref{block_measure}) satisfies $d(\mu_C,\mu)<\eps$.
\end{cor}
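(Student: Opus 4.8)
The plan is to reach the block measure $\mu_C$ in two stages: first produce a block $C$ whose empirical frequencies $\fr_C$ approximate $\mu$, and then use lemma \ref{block_measure} to replace $\fr_C$ by the genuine measure $\mu_C$. These two approximations will be glued together by the termwise triangle inequality
\[
d(\mu_C,\mu)\le d(C,\mu_C)+d(C,\mu),
\]
which holds because for every $k$ and every $D\in\CB_k$ one has $\abs{\mu_C(D)-\mu(D)}\le\abs{\mu_C(D)-\fr_C(D)}+\abs{\fr_C(D)-\mu(D)}$; summing over $D\in\CB_k$ and then over $k$ with the weights $2^{-k}$ yields the displayed bound. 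It therefore suffices to arrange $d(C,\mu_C)<\eps/2$ and $d(C,\mu)<\eps/2$ for one and the same block $C$.

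First I would fix $j$ large enough to serve both tools: large enough for the second assertion of lemma \ref{block_measure} to apply with target $\eps/2$, and large enough that $\sum_{k>j}2^{-k}<\eps/4$. Lemma \ref{block_measure} (with $\eps/2$) then supplies a $\delta>0$ such that whenever $F$ is $(F_j,\delta)$-invariant and $C$ is any block with domain $F\times[1,j]$, the associated measure $\mu_C$ satisfies $d(C,\mu_C)<\eps/2$. Next I would invoke fact \ref{pet_consequence} with the sharper tolerance $\frac{\eps}{4j}$ in place of $\eps$, obtaining $n$ and $\eta$ such that any $(1-\eta)$-subset $F$ of $F_m$ with $m\ge n$ carries a block $C$ on $F\times[1,j]$ with $\abs{\fr_C(D)-\mu(D)}<\frac{\eps}{4j}$ for every $D\in\CB_i$, $i\le j$; by remark \ref{freq_then_dist} (used with tolerance $\eps/2$) this gives $d(C,\mu)<\eps/2$.

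The only real point requiring care, and the step I expect to be the main obstacle, is that the two hypotheses are imposed on the \emph{same} shape $F$: lemma \ref{block_measure} wants $F$ to be $(F_j,\delta)$-invariant, whereas fact \ref{pet_consequence} wants $F$ to be a $(1-\eta)$-subset of some large $F_m$. I would reconcile these by simply taking $F=F_m$ for $m$ large. A \Fo set is trivially a $(1-\eta)$-subset of itself, so fact \ref{pet_consequence} applies to it; and since $(F_n)$ is a \Fo sequence, for all sufficiently large $m$ the set $F_m$ is $(F_j,\delta)$-invariant, so lemma \ref{block_measure} applies as well. Choosing $m\ge n$ with $F_m$ being $(F_j,\delta)$-invariant and putting $F=F_m$, the block $C$ produced by fact \ref{pet_consequence} has $(F_j,\delta)$-invariant domain and satisfies $d(C,\mu)<\eps/2$, while its block measure $\mu_C$ from lemma \ref{block_measure} satisfies $d(C,\mu_C)<\eps/2$. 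The triangle inequality then gives $d(\mu_C,\mu)<\eps$, as required. This compatibility of the two shape conditions is essentially the whole difficulty, which is why the clean choice $F=F_m$ rather than an arbitrary large subset is worth stating explicitly.
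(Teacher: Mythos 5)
Your proposal is correct and follows essentially the same route as the paper: choose $j$ and $\delta$ via Lemma \ref{block_measure} (with $\eps/2$), obtain a block $C$ approximating $\mu$ via Fact \ref{pet_consequence} together with Remark \ref{freq_then_dist}, and combine $d(C,\mu_C)<\eps/2$ with $d(C,\mu)<\eps/2$ by the triangle inequality. Your explicit choice $F=F_m$ is a clean way of resolving the compatibility of the two domain conditions, a point the paper's proof passes over implicitly by asserting that the block from Fact \ref{pet_consequence} can be taken on an $(F_j,\delta)$-invariant domain.
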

\begin{proof}
Choose $\delta$ and $j$ from lemma \ref{block_measure} with $\frac{\eps}{2}$ replacing $\eps$. Assume also that $\sum_{k=j+1}^\infty\frac{1}{2^k}<\frac{\eps}{4}$. By fact \ref{pet_consequence}, we can find a block $C$ on a $(F_j,\delta)$-invariant domain $F$, such that $\abs{\fr_C(D)-\mu(D)}<\frac{\eps}{4j}$ for any block $D$ with domain $F_i\times[1,i]$, $i=1,\ldots,j$. By remark \ref {freq_then_dist} we see that $d(C,\mu)<\frac{\eps}{2}$, and directly from lemma \ref{block_measure} also $d(C,\mu_C)<\frac{\eps}{2}$, therefore $d(\mu_C,\mu)<\eps$. 
\end{proof}
For actions of $\Int$, it is a well-known fact that if a sufficiently long block $C$ is a concatenation of shorter blocks $B_1,B_2,\ldots,B_n$ of equal length, then the probability measure $\mu_C$ (which for actions of $\Int$ can easily be assumed to be shift invariant) can be arbitrarily close to the arithmetic average of the measures $\mu_{B_i}$. An analogous claim can be made for the action of any amenable group $G$; however the lack of a natural way to decompose a subset of $G$ into smaller sets requires the use of quasitilings.

\begin{defn}
	A \emph{(static) quasitaling} of a group $G$ is a family $\CT$ of finite 
	subsets of 
	$G$ (called \emph{tiles}), for which there exist a family 
	$\CS(\CT)=\set{S_1,S_2,\ldots,S_n}$ of finite subsets of $G$ (called 
	\emph{shapes}) and a family $\CC(\CT)=\set{C_1,C_2,\ldots,C_n}$ of subsets 
	of $G$ (called \emph{centers}), such that every $T\in \CT$ has a unique 
	representation $T=S_ic$ for some $i\in\set{1,\ldots,n}$ and $c\in C_i$. 
\end{defn}
Note that every quasitiling can be seen as a symbolic element  
$\CT\in\set{0,1,\ldots,n}^G$, such that $\CT(g)=i$ if $g\in C_i$ for some $i$, 
and $\CT(g)=0$ otherwise.
\begin{defn}
A quasitiling $\CT$ is:
\begin{enumerate}
	\item \emph{disjoint}, if the tiles are pairwise disjoint;
	\item \emph{$\alpha$-covering}, if the union of all tiles has lower Banach 
	density at least $\alpha$.
	\item \emph{congruent} with a quasitiling $\CT'$, if for any two tiles $T\in 
	\CT,T'\in\CT'$ we have either $T\supset T'$ or $T\cap T'=\emptyset $.
\end{enumerate}
\end{defn}

Let $(X,G)$ be a topological dynamical system. Suppose we assign to every $x\in 
X$ a quasitiling $\CT(x)$ of $G$, with the same set of shapes $S_1,\ldots,S_n$ for all 
$x$. This induces a map $x\mapsto\CT(x)$ which can be seen as a map from 
$(X,G)$ 
into $\set{0,1,\ldots,n}^G$ with the shift action. If such a map is a factor 
map 
(i.e. if it is continuous and commutes with the dynamics), we call it a 
\emph{dynamical quasitiling}. A dynamical quastiling is said to be disjoint 
and/or $\alpha$-covering, if $\CT(x)$ has the respective property for every 
$x$.  

 Any $(T,k)$-block whose shape $T$ belongs to a quasitiling $\CT$ will be 
 called a \emph{$(\CT,k)$-block}.

\begin{lem}	\label{concatenation}
For any $\eps>0$ there exist $j\in\Nat$ and $\delta>0$ such that if $\CT$ is a disjoint quasitiling by $(F_j,\delta)$-invariant sets, and $C$ is a block with domain $H\times[1,j]$ such that some disjoint union of tiles $T_1,T_2,\ldots,T_n$ of $\CT$ is a $(1-\delta)$-subset of $H$, then the probability measure $\mu_C$ is $\eps$-close to the average of the measures associated with blocks over individual tiles, i.e. if we denote by $B_i$ the block with domain $T_i\times[1,j]$,
\[d\left(\mu_C,\frac{1}{\sum_{i=1}^n\abs{T_i}}\sum_{i=1}^n \abs{T_i}\mu_{B_i}
\right)<\eps.\]

\end{lem}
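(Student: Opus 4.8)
The plan is to compare $\mu_C$ with the weighted average $\nu=\frac{1}{\sum_i\abs{T_i}}\sum_i\abs{T_i}\mu_{B_i}$ directly on cylinders, reducing everything to counting positions of the level-$j$ window inside the various domains. I would first fix $j$ large enough that $\sum_{k>j}\frac1{2^k}<\frac{\eps}{2}$, exactly as in remark \ref{freq_then_dist}, so that it suffices to make $\abs{\mu_C(D)-\nu(D)}$ smaller than $\frac{\eps}{2j}$ for every $D\in\CB_i$ with $i\le j$; the tail levels contribute less than $\frac\eps2$ by the factor $\frac1{2^k}$. The parameter $\delta$ is then chosen at the very end, once $j$ (and hence $\abs{F_j}$) is frozen.

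The core computation rests on the fact that, by the construction in lemma \ref{block_measure}, both $\mu_C$ and each $\mu_{B_l}$ are constant on cylinders from $\CB_j$ and assign to each such cylinder $C'$ exactly the frequencies $\fr_C(C')$ and $\fr_{B_l}(C')$. Hence, by additivity over the level-$j$ cylinders refining a given $D\in\CB_i$ with $i\le j$, one gets $\mu_C(D)=\frac{1}{N_H(F_j)}\abs{\set{g:F_jg\subset H,\ C[F_ig\times[1,i]]=D}}$ and, analogously, $\mu_{B_l}(D)=\frac{1}{N_{T_l}(F_j)}\abs{\set{g:F_jg\subset T_l,\ B_l[F_ig\times[1,i]]=D}}$. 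The key observation is that whenever the whole window $F_jg$ lies inside a single tile $T_l$, the block $C$ agrees there with $B_l$, so such a $g$ is counted for $C$ if and only if it is counted for $B_l$. Since the tiles are disjoint, summing over $l$ shows that the numerator for $\mu_C(D)$ equals the sum of the tile numerators plus an error term that counts only those $g$ with $F_jg\subset H$ whose window is not contained in any single tile.

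I would then bound this error. An exceptional position $g$ (with $F_jg\subset H$ but $F_jg$ inside no tile) either has its window meeting the uncovered part $H\setminus\bigcup_i T_i$, or straddles the boundary of some tile; the first kind is controlled by the $(1-\delta)$-covering hypothesis and the second by $(F_j,\delta)$-invariance of the tiles via the standard estimate $\abs{\set{g\in T_l:F_jg\cap T_l^c\neq\emptyset}}<\delta\abs{F_j}\abs{T_l}$. Together these give that the number of exceptional positions is at most $\rho\abs{H}$ with $\rho\to0$ as $\delta\to0$, and at the same time that the two normalizing denominators $N_H(F_j)$ and $\sum_i\abs{T_i}=\abs{\bigcup_i T_i}$ both lie within a factor arbitrarily close to $1$ of $\abs{H}$ (the lower bounds coming from $\sum_l N_{T_l}(F_j)\ge(1-\delta\abs{F_j})(1-\delta)\abs{H}$). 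A routine manipulation of the resulting quotients then yields $\abs{\mu_C(D)-\nu(D)}\le\kappa(\delta,j)$ with $\kappa\to0$ as $\delta\to0$ for fixed $j$, uniformly in $D$ and in $i\le j$; choosing $\delta$ so small that $\kappa<\frac{\eps}{2j}$ completes the argument.

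The main obstacle is the bookkeeping of these boundary positions together with the mismatch of normalizations: $\mu_C$ is divided by $N_H(F_j)$ while $\nu$ is a convex combination with weights $\abs{T_i}$, so one must verify that these two scales are close enough that the small discrepancy in the numerators is not amplified when passing to quotients. Both hypotheses on the quasitiling---disjointness with $(1-\delta)$-covering, and $(F_j,\delta)$-invariance of the tiles---are used precisely here, and it is their interplay, rather than any single estimate, that is the crux of the proof.
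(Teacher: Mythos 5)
Your proof is correct, and its combinatorial heart coincides with the paper's: the paper likewise splits window positions into those lying inside a single tile and an exceptional set $E_k=\set{h\in H: \forall i\ F_kh\cap T_i^c \not=\emptyset}$, proves $\abs{E_k}\le\delta\abs{H}(\abs{F_k}+1)$, hence $N_H(F_k)\ge\abs{H}(1-\delta(1+\abs{F_k}))$, and controls the same three error terms (exceptional positions, mismatch of $N_H(F_k)$ versus $\sum_i\abs{T_i}$, and mismatch of $\abs{T_i}$ versus $N_{T_i}(F_k)$). Where you genuinely differ is in what is being compared. The paper never evaluates $\mu_C(D)$ or $\mu_{B_i}(D)$ directly: it works with frequencies, showing that $\abs{\fr_C(D)-\frac{1}{\sum_i\abs{T_i}}\sum_i\abs{T_i}\fr_{B_i}(D)}$ is small at every level $k\le j$, and then invokes Lemma \ref{block_measure} as a black box twice --- once per tile to replace $\fr_{B_i}(D)$ by $\mu_{B_i}(D)$, and once more for the block $C$ itself to get $d(C,\mu_C)<\frac{\eps}{2}$ --- finishing with the triangle inequality through $C$. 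You instead exploit the exact identity $\mu_C(D)=\frac{1}{N_H(F_j)}\abs{\set{g:F_jg\subset H,\ C[F_ig\times[1,i]]=D}}$ (and its analogue in each tile), valid because the defining densities are constant on level-$j$ cylinders, and compare $\mu_C(D)$ with $\nu(D)$ in a single counting step carried out at level $j$ only. This makes your argument marginally more self-contained: you use Lemma \ref{block_measure} only for the construction of the block measures, not for its approximation estimate, and you bypass the (implicit, easily repaired) point in the paper's proof that applying that estimate to $C$ requires $H$ itself to be sufficiently $(F_j,\delta_j)$-invariant --- a fact which does follow from the hypotheses but is not verified there. The price is that your proof internalizes the boundary bookkeeping that the paper delegates to Lemma \ref{block_measure}; both routes use the disjointness, the $(1-\delta)$-covering, and the $(F_j,\delta)$-invariance of the tiles in exactly the same places, and yield the same dependence of $\delta$ on $j$ and $\eps$.
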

\begin{proof}
Applying lemma \ref{block_measure}, for any $j$ there is $\delta_j$ such that for any block $B$ with domain $F\times[1,j]$, where $F$ is a $(F_j,\delta_j)$-invariant set, and for any block $D$ with domain $F_i\times[1,i]$, $i\leq j$, we have $\abs{\fr_B(D)-\mu_B(D)}<\frac{\eps}{8j}$.
Let $H$ be a subset of $G$ and let $\CT$ be a quasitiling of $G$ by $(F_j,\delta)$-invariant sets for some $\delta > 0$. Suppose that the union $\bigcup_{i=1}^n T_i$ is a $(1-\delta)$-subset of $H$ for some pairwise disjoint tiles $T_1,T_2,\ldots,T_n$ belonging to $\CT$. For every $k\leq j$ let us define  the set
\[
E_k=\set{h\in H: \forall i\ F_kh\cap T_i^c \not=\emptyset}
\]
Then 
\begin{eqnarray*}
|E_k| & \leq & \sum_{i=1}^n |\set{h\in T_i: \forall i\ F_kh\cap T_i^c \not=\emptyset}| + |H\setminus \bigcup_{i=1}^n T_i|\\
& \leq & \sum_{i=1}^n \delta|F_k||T_i| + \delta|H| \leq \delta|H|(|F_k|+1),
\end{eqnarray*}
hence $N_H(F_k) \geq |H|-|E_k| \geq |H|(1-\delta(1+|F_k|))$.
Clearly, we can demand that $\delta<\delta_j$ (further restrictions will follow). Note  that since each $T\in\CT$ is $(F_j,\delta)$-invariant, for any block $B$ whose domain is a tile of $\CT$ the measure $\mu_B$ is well-defined.

Now, let $C$ be a block with domain $H\times[1,j]$, where $H$ is $(1-\delta)$-tiled by $T_1,...,T_n$, and let $C[T_i]=B_i$. For any $k\leq j$ and for any block $D$ with domain $F_k\times[1,k]$ we have:
\[
N_C(D)=\sum_{i=1}^nN_{B_i}(D)+N_{E_k}(D) 
\]
Therefore, using the traingle inequality,
\begin{multline*}
\abs{\fr_C(D) - \frac{1}{\sum_{i=1}^n\abs{T_i}}\sum_{i=1}^n\fr_{B_i}(D)\abs{T_i}} =\\
 =\abs{\frac{\sum_{i=1}^nN_{B_i}(D)+N_{E_k}(D)}{N_H(F_k)} - \frac{1}{\sum_{i=1}^n\abs{T_i}}\sum_{i=1}^n\fr_{B_i}(D)\abs{T_i}} \\
\leq \frac{N_{E_k}(D)}{N_H(F_k)} + \sum_{i=1}^nN_{B_i}(D)\cdot\left|\frac{1}{N_H(F_k)} - \frac{1}{\sum_{i=1}^n\abs{T_i}}\right| +\\
+ \frac1{\sum_{i=1}^n\abs{T_i}}\cdot\sum_{i=1}^n \left(N_{B_i}(D)\left|1 - \frac{\abs{T_i}}{N_{T_i}(F_k)}\right|\right)
\end{multline*}
We can estimate that $\frac{N_{E_k}(D)}{N_H(F_k)} < \delta(|F_k|+1)$, $\left|\frac{1}{N_H(F_k)} - \frac{1}{\sum_{i=1}^n\abs{T_i}}\right| \leq \frac{1}{N_H(F_k)}\frac{\delta(|F_k|+2)}{1-\delta}$ and $\left|1 - \frac{\abs{T_i}}{N_{T_i}(F_k)}\right| \leq \frac{\delta|F_k|}{1-\delta|F_k|}$, so the whole expression can be made smaller than $\frac{\eps}{8j}$ by appropriate choice of a small $\delta$.

Now, for every $B_i$ we have $\fr_{B_i}(D)$ is approximately equal to $\mu_{B_i}(D)$ with error $\frac{\eps}{8j}$, and this approximation is preserved by the weighted average we have obtained, therefore
\[
\abs{\fr_C(D)-\frac{1}{\sum_{i=1}^n\abs{T_i}}\sum_{i=1}^n\abs{T_i}\mu_{B_i}(D)} < \frac{\eps}{4j}
\]
If $j$ is sufficiently large, remark $\ref{freq_then_dist}$ implies that 
\[d(C,\frac{1}{\sum_{i=1}^n\abs{T_i}}\sum_{i=1}^n\abs{T_i}\mu_{B_i})<\frac{\eps}{2},\]
and since $d(C,\mu_C)<\frac{\eps}{2}$, we also have
\[d(
\mu_C,\frac{1}{\sum_{i=1}^n\abs{T_i}}\sum_{i=1}^n\abs{T_i}\mu_{B_i})<\eps.\]
\end{proof}

We will use the following lemma proven in \cite{H}:
\begin{lem}\label{tilings}
Let $X$ be a zero-dimensional system with free action of an amenable group $G$. 
For any $\eps>0$, $\delta>0$ and $n\in\Nat$ there exists a disjoint, 
$(1-\eps)$-covering dynamical quasitiling $\CT$ such that every shape of $\CT$ 
is a $(1-\delta)$-subset of a F\o lner set $F_m$ with $m\geq n$.
\end{lem}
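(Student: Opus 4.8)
The plan is to split the construction into a purely combinatorial quasitiling of $G$ in the spirit of Ornstein and Weiss, and a dynamical upgrade turning it into a continuous, shift-equivariant assignment $x\mapsto\CT(x)$. First I would fix the shapes: choose indices $n\le m_1<m_2<\cdots<m_L$ growing fast enough that each $F_{m_{i+1}}$ is $(F_{m_i},\gamma)$-invariant for a small auxiliary parameter $\gamma$, and take the candidate shapes to be $S_i=F_{m_i}$ (or slightly shrunken ``cores'' of them). Since each $S_i$ is trivially a $(1-\delta)$-subset of $F_{m_i}$ with $m_i\ge n$, the required shape condition holds throughout, and it survives the mild shrinking that may be needed to force exact disjointness. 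The number of scales $L$ is selected at the very end, large enough that the greedy procedure below leaves an uncovered proportion below $\eps$.

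For the static claim I would run the Ornstein--Weiss greedy packing, processing shapes from the largest $S_L$ down to $S_1$. Given the part of $G$ still uncovered, I place a maximal family of pairwise disjoint translates $S_ic$ contained in that uncovered part, declare them tiles of shape $S_i$, and pass to the next smaller shape applied to the new remainder; pairwise disjointness is then automatic. The substance is the covering estimate: at the moment $S_i$ is processed the remainder is $(S_i,\gamma)$-invariant, and maximality together with this invariance forces each stage to remove a proportion of the current remainder that is bounded below independently of the base region, so that after all $L$ stages the union of tiles has lower Banach density at least $1-\eps$. This is exactly the Ornstein--Weiss quasitiling estimate; the separation of scales and the strong invariance of the $S_i$ are what make the per-stage bound uniform.

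The genuinely new step, which I expect to be the main obstacle, is to carry out the above \emph{equivariantly and continuously} in $x$, so that the induced map into $\set{0,1,\ldots,n}^G$ is a factor map. The tension is that a maximal greedy packing is a global, order-dependent choice on $G$, whereas a dynamical quasitiling must determine the tile through each coordinate from only finitely many coordinates of $x$ (continuity) and must commute with the action (equivariance). I would resolve this by a marker / Kakutani--Rokhlin construction: using freeness of the action and zero-dimensionality of $X$, build for each scale $i$ a clopen set $M_i\subset X$ whose visiting times $C_i(x)=\set{g\in G:gx\in M_i}$ are sufficiently $S_i$-separated and syndetic to serve as centers, choosing the $M_i$ from the coarsest scale inward so that along every orbit the induced centers realize a maximal disjoint packing of the same combinatorial type as in the static step. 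Freeness makes $g\mapsto gx$ injective, so the group combinatorics transfer faithfully to each orbit; zero-dimensionality lets every finitary decision be encoded by a clopen set, giving continuity of $x\mapsto\CT(x)$; and the equivariant form of the defining condition $gx\in M_i$ yields commutation with the action. Finally, the covering estimate from the static step, being uniform over base regions, gives the $(1-\eps)$-covering of each $\CT(x)$, while the shape condition is preserved, which is the assertion imported from \cite{H}.
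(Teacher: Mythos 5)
The first thing to note: the paper does not prove this lemma at all --- it is quoted as a ready result of the cited paper [H] (Huczek, \emph{Zero-dimensional extensions of amenable group actions}), so there is no in-paper proof to compare against. Your sketch does follow the same general strategy as that source and its precursors: an Ornstein--Weiss packing argument made dynamical through clopen marker sets, using freeness and zero-dimensionality. So the plan is the right one.

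However, as a proof the sketch has a genuine gap at exactly the step you yourself flag as the main obstacle. A \emph{maximal} disjoint packing cannot in general be realized continuously and equivariantly: whether a given candidate tile belongs to a maximal family can depend on an unbounded chain of neighbouring decisions (select $c_1$, therefore skip $c_2$, therefore select $c_3$, and so on), so maximality is not determined by any finite window of coordinates of $x$. Your phrase ``the induced centers realize a maximal disjoint packing of the same combinatorial type as in the static step'' therefore asserts precisely what cannot be done directly, and the construction must abandon maximality rather than simulate it. The standard resolution is to place tiles at \emph{all} visiting times of the marker sets (separation of the markers gives disjointness within a scale, syndeticity gives abundance), to trim lower-scale tiles so that they avoid higher-scale ones, and then to re-derive the $(1-\eps)$-covering bound from syndeticity by a counting argument --- one cannot simply ``import'' the static Ornstein--Weiss estimate, because that estimate rests on maximality. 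This trimming is also why the lemma only promises shapes that are $(1-\delta)$-subsets of F{\o}lner sets: the shrinking is intrinsic to the construction and its size must be controlled by the invariance of the F{\o}lner sets, not a ``mild'' afterthought as in your first paragraph. Finally, the existence of the clopen sets $M_i$ with separated and syndetic visiting times is itself a nontrivial marker lemma (this is where freeness, zero-dimensionality and compactness actually do work) and is asserted rather than proven in your sketch.
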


\begin{lem}\label{close_blocks}
Let $X$ be a zero-dimensional dynamical system (in array form) with the shift 
action of an amenable group $G$, and let $K$ be a face in the simplex 
$\CM_G(X)$. For any $\delta>0$ and $\eps>0$ there exists an $\eta$, $n$ and $j$ 
such that if $\CT$ is a disjoint, $(1-\eta)$-covering dynamical quasitiling by 
$(F_n,\eta)$-invariant sets, and $\CB$ denotes the family of all 
$(\CT,j)$-blocks $B$ such that $d(B,K)>\delta$, then $\sum_{B\in 
\CB}\mu(B)\abs{B} \leq \eps$ for every $\mu\in K$.
\end{lem}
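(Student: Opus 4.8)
The plan is to fix, once and for all, a finite resolution $j$. By Remark \ref{freq_then_dist} the full metric $d$ is determined up to an arbitrarily small error by the levels $i\le j$, so the condition $d(B,K):=\inf_{\nu\in K}d(B,\nu)>\delta$ becomes a condition on the finitely many frequencies $\fr_B(D)$ with $D\in\CB_i$, $i\le j$. Consequently, for a fixed admissible quasitiling $\CT$ (whose shapes $S_1,\dots,S_r$ are finite in number), the family $\CB$ of bad $(\CT,j)$-blocks is finite and each induces a clopen cylinder, so $\mu\mapsto\sum_{B\in\CB}\mu(B)\abs{B}$ is affine and weak*-continuous on $K$. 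An affine continuous functional on a compact convex set attains its supremum at an extreme point, and the extreme points of $K$ are exactly the ergodic measures lying in $K$. Since $K$ is a face, the ergodic decomposition of any $\mu\in K$ is carried by such ergodic measures, so it suffices to bound the quantity for ergodic $\mu\in K$.

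Next I would set up the bookkeeping that turns the weighted block-sum into the measure of a concrete set. Encoding $\CT$ symbolically, write $A_i=\set{x:\CT(x)(e)=i}$ for the (clopen) set of $x$ having a tile of shape $S_i$ centered at the neutral element $e$; the block over that tile is $x[S_i\times[1,j]]$. Using disjointness and invariance one checks that $\sum_{i}\abs{S_i}\mu(A_i)$ equals the $\mu$-measure of the set of $x$ for which $e$ is covered by some tile (a number in $[1-\eta,1]$ by the covering property), and, restricting to bad blocks, that
\[
\sum_{B\in\CB}\mu(B)\abs{B}=\mu(\mathrm{BadCov}),
\]
where $\mathrm{BadCov}$ is the clopen set of those $x$ for which $e$ lies in a tile of $\CT(x)$ carrying a block at $d$-distance $>\delta$ from $K$. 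Thus the task is reduced to making $\mu(\mathrm{BadCov})$ small for every ergodic $\mu\in K$.

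For a fixed ergodic $\mu\in K$ this is exactly the content of the pointwise ergodic theorem. A tile covering $e$ has shape a $(1-\eta)$-subset of some $F_m$ with $m\ge n$; read at its center, the level-$\le j$ frequencies of its block are Følner averages over that near-invariant set of the finitely many clopen indicators $\mathbb 1_D$, $D\in\CB_i$, $i\le j$. By Fact \ref{pet_consequence} (equivalently Theorem \ref{ergodic_thm}) these averages converge to $\mu(D)$ for $\mu$-a.e.\ point, so for $n$ large and $\eta$ small every covering tile carries a block with $d(\cdot,\mu)<\delta$, hence with $d(\cdot,K)<\delta$; thus $\mu(\mathrm{BadCov})$ is as small as we wish. (Alternatively, Lemma \ref{concatenation} shows the global block over $F_N$ is the tile-size-weighted average of the tile blocks, which must approximate the ergodic component.)

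The hard part will be \emph{uniformity}: the thresholds $n,\eta$ furnished by Fact \ref{pet_consequence} depend on the measure, whereas $n,\eta,j$ must be chosen before $\mu$ and must serve every admissible $\CT$ and every ergodic $\mu\in K$ at once. To handle this I would pass from $\mathrm{BadCov}$ to the fixed open sets
\[
W_n=\set{x:\exists\,m\ge n,\ \exists\,(1-\eta_0)\text{-subset }F\subseteq F_m,\ d(x[F\times[1,j]],K)>\delta},
\]
which contain $\mathrm{BadCov}$ for every admissible $\CT$ with parameters $n$ and $\eta\le\eta_0$, and which do not depend on $\CT$ or $\mu$. The exceptional set $\bigcap_n W_n$ has measure zero for \emph{all} $\mu\in K$ simultaneously, since for each $\mu\in K$ almost every point is generic for an ergodic component lying in $K$; hence $\mu(W_n)\downarrow0$ pointwise on $K$. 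The remaining, and crucial, step is to upgrade this to $\sup_{\mu\in K}\mu(W_n)\to0$. This I would obtain by combining the Lindenstrauss maximal inequality for the tempered F\o lner sequence—whose weak-type constant is uniform over all measures—applied simultaneously to the finitely many indicators $\mathbb 1_D$, $D\in\CB_i$, $i\le j$, with the weak* compactness of $K$; the maximal inequality yields a measure-independent tail bound on the deviation of F\o lner averages, and compactness rules out a sequence $\mu_l\in K$ keeping $\mu_l(W_{n_l})$ bounded away from $0$. Once $n$ is chosen with $\sup_{\mu\in K}\mu(W_n)\le\eps$ and $\eta\le\eta_0$, the inclusion $\mathrm{BadCov}\subseteq W_n$ gives $\sum_{B\in\CB}\mu(B)\abs{B}=\mu(\mathrm{BadCov})\le\eps$ for all $\mu\in K$, as required.
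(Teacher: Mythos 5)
Your reduction steps are essentially sound: restricting to ergodic $\mu\in K$ via the Bauer maximum principle and the face property, rewriting $\sum_{B\in\CB}\mu(B)\abs{B}$ as the measure of the set of points whose covering tile carries a bad block (modulo the minor repair that tiles are translates $S_ic$ of shapes, so your sets $W_n$ must allow translated windows $F\subseteq F_mc$ containing $e$), and the pointwise statement $\mu(W_n)\to0$ for each fixed $\mu\in K$. The genuine gap is exactly the step you yourself flag as crucial: upgrading this to $\sup_{\mu\in K}\mu(W_n)\to0$. Neither of your two tools can deliver it. The Lindenstrauss maximal inequality has a measure-independent weak-type \emph{constant}, but it bounds the maximal function, not the deviation of F\o lner averages from their limit; the limits $\mu(D)$ are themselves measure-dependent, and converting a maximal inequality into a.e.\ convergence requires approximation in a dense subclass of $L^1(\mu)$ at a rate that depends on $\mu$. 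In fact no measure-independent deviation bound exists: take $G=\Int$, let $\mu_l$ be the periodic orbit measure of $(0^l1^l)^\infty$, and let $K$ be the smallest closed face containing all $\mu_l$ (it then also contains $\delta_{0^\infty}$, $\delta_{1^\infty}$ and their convex combinations). For windows of length $n\ll l$, a $\mu_l$-typical block has $1$-frequency near $0$ or $1$, hence is far from $\mu_l$; so the conclusion of your ergodic-theorem step (``every covering tile carries a block with $d(\cdot,\mu)<\delta$'') cannot hold with $n$ chosen before $\mu$. Such blocks stay close to $K$ only because $K$ contains the \emph{other} measures $\delta_{0^\infty},\delta_{1^\infty}$: the uniform statement is rescued by the convexity/face structure of $K$, which your uniformity argument never invokes. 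The compactness half is also invalid as an abstract principle: $W_n$ is open, so $\mu\mapsto\mu(W_n)$ is only lower semicontinuous, which is the wrong direction for ruling out a sequence $\mu_l\in K$ with $\mu_l(W_{n_l})\geq\eps$; decreasing open sets with $\mu(W_n)\to0$ pointwise on a compact convex set of measures need not converge uniformly (consider $W_n=(0,\tfrac1n)$ and the measures $\delta_{1/m}$ on $[0,1]$).

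This is precisely where the paper takes a different, convex-geometric route that avoids uniform ergodic rates altogether. It fixes the closed set $F$ of probability measures $\delta$-far from $K$, and the sets $V_\alpha$ of measures all of whose barycentric decompositions $\xi$ supported on the closed $\alpha$-neighborhood of $\CM_G(X)$ satisfy $\xi(F)<\eps$; the portmanteau lemma makes $V_\alpha$ open, and the face property together with compactness forces $K\subset V_\alpha$ for small $\alpha$ --- this is the only place the face property enters, and it enters as a statement about decompositions, not about rates. Then, for any single ergodic $\mu\in K$ violating the conclusion, corollary \ref{approx_ergodic} (applied to this particular $\mu$; the measure-dependence is harmless because it occurs inside a proof by contradiction) produces a block $C$ with $\mu_C$ within $\gamma/2$ of $\mu$ and still carrying at least $\eps$ worth of bad tiles, and lemma \ref{concatenation} writes $\mu_C$, up to error $\gamma/2$, as the barycenter of a discrete distribution $\xi$ on the tile-block measures $\mu_{B_i}$, each lying $\alpha$-close to $\CM_G(X)$; the bad tiles force $\xi(F)\geq\eps$, contradicting membership of this barycenter in $V_\alpha$. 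If you wish to keep your $W_n$-formulation, you would have to inject this convexity argument into the uniformity step; the maximal inequality cannot substitute for it.
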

\begin{proof}
Let $F$ denote the (closed) complement of the open $\delta$-ball around $K$ in 
$\CM(X)$ (note that we use here the space of all probability measures, not the space of invariant measures). Obviously, $\{\mu_B:B\in\CB\}\subset F$ for sufficiently large $n$ and $j$. For every $\alpha$, consider the set 
$V_\alpha\subset\CM(X)$ consisting of measures $\mu$ with the following 
property: if $\mu=\int_{\CM(X)}\nu d\xi$, and $\xi$ is supported by the 
closed $\alpha$-neighborhood of $\CM_G(X)$, then $\xi(F)<\eps$. Since $F$ is 
closed, 
the portmanteau lemma implies that $V_\alpha$ is an open set, and if $\alpha$ 
is small enough, $V_\alpha$ has to contain $K$ (otherwise, letting $\alpha$ 
tend to $0$, we could find a measure in $K$ that is a barycenter of a 
distribution on $\CM(X)$ not supported entirely by $K$, which is not 
possible). Let $\gamma$ be small enough 
that the open $\gamma$-neighborhood of $K$ is contained in $V_\alpha$. 
For fixed $\eps$ and $\delta$ choose $\eta$, $n$ and $j$ so that lemma 
\ref{concatenation} applied to a $(F_n,\eta)$-quasitiling yields the error of 
approximation $\gamma/2$ and let $\CT$ be such a quasitiling. Making $n$ and 
$j$ large 
enough, we can also assume that every block with domain $S\times 
\set{1,\ldots,j}$ (where $S$ is a shape of $\CT$) that occurs 
in $X$ lies in the $\alpha$-neighborhood of the set of invariant measures on 
$X$.
Note that the union $\bigcup\CB$ of the collection of all elements of $\CB$ (as defined in the statement of the lemma) is clopen, and thus the function $\mu\mapsto\mu(\bigcup\CB)$ is continuous on the set $\CM(X)$. 
Suppose that $\mu$ is an ergodic measure in $K$ such that $\sum_{B\in 
\CB}\mu(B)\abs{B}>\eps$. The function $\nu\mapsto \sum_{B\in \CB}\nu(B)\abs{B}$ 
is continuous, therefore if $\nu$ is close enough to $\mu$, then $\sum_{B\in 
\CB}\nu(B)\abs{B}>\eps$. In particular, by corollary \ref{approx_ergodic} we 
can find a block $C$ occurring in $X$, such that 
$d(\mu_C,\mu)<\frac{\gamma}{2}$, and $\sum_{B\in 
\CB}\mu_C(B)\abs{B}>\eps$. We can also assume that the union of tiles of $\CT$ 
contained in the domain of $C$ is a $(1-\eta)$-subset of $C$.
By lemma \ref{concatenation}, $\mu_C$ is closer than $\frac{\gamma}{2}$ to 
$\nu=\frac{1}{\sum_{i=1}^n\abs{B_i}}\sum_{i=1}^n \abs{B_i}\mu_{B_i}$, where 
$B_1,...,B_n$ are all $\CT$-blocks occurring in $C$. For all $i$ such that 
$B_i\in\CB$, we 
have $\delta_{\mu_{B_i}}(F)=1$, so for 
$\xi=\frac{1}{\sum_{i=1}^n\abs{B_i}}\sum_{i=1}^n\abs{B_i}\delta_{\mu_{B_i}}$ we 
have $\xi(F)\geq\eps$. Since $\nu$ is in $V_\alpha$ (where 
such a decomposition should not exist), this is a contradiction.
\end{proof}

\section{Proof of the main result}

\begin{proof}[Proof of Theorem \ref{thm:main}]
Let $K$ be a face in $\CM_G(X)$. Recall that $X$ is represented as an array system, i.e. it is a subset of $Z=\prod_{j\in\Nat}{\Lambda_j}^G$, where $|\Lambda_j|<\infty$.
Fix a decreasing sequence $\eps_t$ such that 
$\sum_{t=1}^{\infty}\eps_t<\infty$. We will construct a sequence of maps 
$\phi_t$ on $X$, which are all going to be invertible continuous coding maps. 
Then we will prove that the sequence of maps $\Phi_t:\CM(X)\to \CM(Z)$, $\Phi_t(\mu)=\mu\circ\phi_t^{-1}$, converges uniformly on $K$ to an affine homeomorphism $\Phi$, while $\phi_t$ converge pointwise on a set of full measure to a map establishing an isomorphism between $(X,\mu)$ and some $(Y,\Phi(\mu))$ for each $\mu\in K$.
The construction will be inductive: let $\phi_0$ be the identity map. Now, 
supposing we have constructed a map $\phi_{t-1}$, let $X_{t-1}=\phi_{t-1}(X)$ 
(since $\phi_{t-1}$ is an invertible coding map, $X_{t-1}$ is conjugate to 
$X$).  There exist $k_t$, $n_t$ and some $\delta_t>0$ such that if $F$ is 
$(1-\delta_t)$-subset of a F\o lner set $F_m$, with $m\geq n_t$, and $B$ is a 
block with domain $F\times[1,k_t]$ occurring in $X_{t-1}$, then the distance 
between $\mu_B$ and $\CM_G(X_{t-1})$ is less than $\eps_t$. Let $\CT_t$ be a 
disjoint, $(1-\delta_t)$-covering, dynamical tiling of $X_{t-1}$, whose shapes are all $(1-\delta_t)$ 
subsets of F\o lner sets with indices greater than $n_t$ (see lemma \ref{tilings}). We can also assume 
that $\CT_{t}$ is congruent with $\CT_{t-1}$, and that $\CT_t$ together with 
$k_t$
satisfy the statement of lemma \ref{close_blocks}, i.e that if $\CB$ denotes 
the family of all $(\CT_t,k_t)$-blocks $B$ such that $d(B,\Phi_{t-1}(K))>\delta_t$, 
then $\sum_{B\in\CB}\mu(B)\abs{B}<\eps_t$ for every $\mu\in\Phi_{t-1}(K)$.

For every $(\CT_t,k_t)$-block $B$ the probability measure $\mu_B$ is closer than $\eps_t$ to some $\mu\in \CM_G(X_{t-1})$. By fact \ref{pet_consequence}, if $n_t$ is large enough, we can assume that for every shape $S$ of $\CT_t$ there exists a block $B_S$ with domain $S$, such that $\mu_{B_S}$ is closer than $\eps_t$ to some $\mu\in \Phi_{t-1}(K)$. We shall define a map $\tilde\phi_t$ as follows: for any $x\in X_{t-1}$ and any $T\in \CT_t(x)$, let $S$ be the shape of $T$ and let $B=x[T\times[1,k_t]]$. If the distance between $B$ and $\phi_{t-1}(K)$ is more than $\delta_t$, replace $x[T\times[1,k_t]]$ with $B_S$. Otherwise, $\tilde{\phi}_t$ introduces no changes. By doing this for all $T\in \CT_t(x)$, we obtain a new array, $\tilde{\phi}_t(x)$. 

Observe that if $x$ is in the support of any measure $\mu\in \Phi_{t-1}(K)$, 
then (by lemma \ref{close_blocks}) the union of tiles $T\in \CT_t(x)$ such that 
$x[T\times[1,k_t]]$ is a block distant by more than $\delta_t$ from $\Phi_{t-1}(K)$ has 
upper Banach density less than $\eps_t$, therefore $\tilde\phi_t(x)$ differs 
from $x$ on a set of coordinates of density less than $\eps_t$. This also means 
that the set of points $x\in X_{t-1}$ such that $\tilde\phi_t(x)$ differs from 
$x$ in column $e$ also has measure $\mu$ less than $\eps_t$ for $\mu\in 
\Phi_{t-1}(K)$.

Now let $\phi_t=\tilde{\phi}_t\circ\phi_{t-1}$. Since $\phi_t$ makes no changes in rows with indices $k_t$ and greater (and they allow us to determine the content of rows $0$ through $k_t$), it is a conjugacy. Furthermore, let $X_t=\phi_t(X)$ and let $\nu$ be an ergodic measure in $\CM_G(X_t)=\Phi_t(\CM_G(X))$. By Corollary \ref{approx_ergodic} for sufficiently large $n$ there is $x\in X_t$ such that $\mu_{x[C]}$, $C=F_n\times [1,k_t]$, is close to $\nu$. 
By the construction of $\phi_t$, every $(\CT_t,k_t)$-block in $x$ is closer than $\eps_t$ to some $\mu\in\Phi_{t-1}(K)$. If $F_n$ is a set sufficiently far in the \Fo sequence, then $x[C]$ is a block that is close to being a concatenation of $\CT_t$ blocks (the union of tiles of $\CT_t$ contained in $F_n$ is a $(1-\delta_t)$-subset of $F_n$). Therefore, by Lemma \ref{close_blocks} the measure $\mu_C$ differs by less than $\eps_t$ from $\frac{1}{\sum_{i=1}^n\abs{T_i}}\sum_{i=1}^n \abs{T_i}\mu_{x[B_i]}$, where $B_i=T_i\times [1,k_t]$.  Since each $x[B_i]$ is $\eps_t$-close to $\mu_{x[B_i]}$ the combination is $2\eps_t$-close to measure in $\Phi_{t-1}(K)$.

We will show that the maps $\Phi_t$ converge uniformly on $K$. To this end, it suffices to uniformly estimate the distance between $\Phi_t(\mu)$ and $\Phi_{t-1}(\mu)$ for ergodic $\mu\in K$ by a summable sequence. By lemma $\ref{close_blocks}$, for any $\mu\in K$ we have the estimate $\sum_{B\in \CB}\mu(B)\abs{B}<\eps_t$ for every $\mu\in K$, where $\CB$ denotes the family of all $\CT$-blocks $B$ such that $d(B,K)>\delta_t$. This implies that if $x\in X$, then the set of coordinates in $\phi_{t-1}(x)$ belonging to tiles of $\CT_t$ that are domains of blocks from $\CB$ has upper Banach density less than $\eps_t$. Since $\tilde{\phi}_t$ only makes any changes on these coordinates, $\phi_t(x)$ differs from $\phi_{t-1}(x)$ on a set of density less than $\eps_t$. If $x$ is in the support of some invariant measure $\mu$, then $\phi_{t-1}(x)$ and $\phi_t(x)$ are in the support of $\Phi_{t-1}(\mu)$ and $\Phi_t(\mu)$, respectively, and since the two points agree on a set of large lower Banach density, the measures are also close.

This uniform convergence, together with the fact that $\Phi_t(\CM_G(X))$ is within the $2\eps_t$-neighborhood of $\Phi_{t-1}(K)$, implies that $\Phi(\CM_G(X))\subset\Phi(K)$, and since the other inclusion is obvious, the two sets are equal. 

Now, define the set $Y$ (which will support the desired assignment) as follows:

\[Y=\bigcap_{s=1}^\infty\overline{\bigcup_{t=s}^\infty X_t}.\]
Observe that $Y$ is a closed, shift-invariant set, and that for any \Fo set $F$ and any $k\in\Nat$ every block with domain $F\times[1,k]$ in $Y$ occurs in infinitely many of the sets $X_t$. It follows that every invariant measure on $Y$ can be approximated by invariant measures on the $X_t$'s, and thus the set of invariant measures on $Y$ is contained in $\Phi(\CM_G(X))=\Phi(K)$. The other inclusion is generally true: for any weakly* convergent sequence of measures $\mu_t$ supported by $X_t$, the limit measure $\mu$ is always supported by $\bigcap_{s=1}^\infty\overline{\bigcup_{t=s}^\infty X_t}$. Therefore $\CM_G(Y)=\Phi(K)$.

Now, observe that for any $\mu\in K$ the set of points $x\in X_{t-1}$ such that the column $x(e)$ is modified by $\tilde{\phi}_t$ has measure $\mu$ less than $\eps_t$, because $\tilde\phi_t$ commutes with the shift map and for any $x$ in the support of $\mu$ the set of modified coordinates has upper Banach density less than $\eps_t$. Since the sequence $\eps_t$ is summable, the Borel-Cantelli lemma implies that for almost every $x\in X$ the columns $\phi_t(x)(e)$ are all equal from some point onwards. By shift-invariance, the same is true for $\phi_t(x)(g)$ for any $g$, so ultimately we conclude that if $\mu\in K$, then for $\mu$-almost every $x\in X$ every coordinate of $x$ is only changed finitely many times. This means that a limit point $\phi(x)$ is then well-defined, and this map $\phi$ is invertible (since every $\phi_t(x)$ retains the original contents of $x$ in the bottom row). In other words $\phi$ is an isomorphism between the measure-theoretic dynamical systems $(X,\mu)$ and $(Y,\Phi(\mu))$. 

\end{proof}
\section{Concluding remarks}
Firstly, we note that we can strengthen theorem \ref{thm:main} by combining it 
with theorem 1.2 of 
\cite{FH}, obtaining the following version:
\begin{thm}\label{thm:main2}
	Let $X$ be a Cantor system with free action of an amenable group $G$ and 
	let $K$ be a face in the simplex $\CM_G(X)$ of $G$-invariant measures of 
	$X$. There exists a Cantor system $Y$ with \emph{minimal} free action of 
	$G$, such that the natural assignment on $Y$ is equivalent to the identity 
	assignment on $K$.
\end{thm}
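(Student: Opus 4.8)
The plan is to combine the main construction of Theorem~\ref{thm:main} with the minimality-producing machinery of \cite{FH}. Theorem~\ref{thm:main} already yields a Cantor system $Y$ with a free action of $G$ whose natural assignment is equivalent to the identity assignment on $K$; the only missing feature is \emph{minimality} of the action on $Y$. So the task reduces to upgrading the output system $Y$ to a minimal one \emph{without disturbing} the simplex of invariant measures or the measure-isomorphism on each $\mu\in K$. The natural route is to apply Theorem~1.2 of \cite{FH}, which (in that paper's setup) takes a zero-dimensional free $G$-system and produces a minimal free $G$-system carrying an affinely homeomorphic simplex of invariant measures with the individual measure-preserving systems preserved up to isomorphism.

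First I would run the proof of Theorem~\ref{thm:main} verbatim to obtain the system $Y$ with $\CM_G(Y)=\Phi(K)$ and, for each $\mu\in K$, a measure isomorphism between $(X,\mu)$ and $(Y,\Phi(\mu))$. Then I would feed $Y$ into the construction of \cite{FH}, producing a minimal free $G$-system $Y'$. The key point to verify is that the \cite{FH} construction is an \emph{assignment-preserving} modification: it should supply an affine homeomorphism $\CM_G(Y)\to\CM_G(Y')$ under which each invariant measure's measure-preserving system is unchanged up to isomorphism. Composing this affine homeomorphism with $\Phi$ gives the required affine homeomorphism $K\to\CM_G(Y')$, and composing the isomorphisms gives, for each $p\in K$, an isomorphism between the identity-assignment system on $K$ at $p$ and the natural-assignment system on $Y'$ at the image of $p$. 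This is exactly the equivalence of assignments demanded in the statement.

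I would also double-check the two structural properties that must survive the composition. Freeness of the action on $Y'$ is guaranteed directly by \cite{FH}, and $Y'$ remains a Cantor system (zero-dimensional, compact, metrizable, perfect) because the \cite{FH} construction stays within the category of zero-dimensional systems and minimality of a nontrivial system forces perfectness. Since an affine homeomorphism of simplices and a pointwise isomorphism of the assigned systems is precisely what "equivalence of assignments" means in the Definition above, chaining the two equivalences yields the claimed equivalence between the natural assignment on $Y'$ and the identity assignment on $K$.

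The main obstacle I anticipate is not the logical chaining but confirming that Theorem~1.2 of \cite{FH} is stated in a form that transfers the \emph{entire} assignment and not merely the affine structure of the simplex: one needs the individual isomorphisms $(Y,\nu)\cong(Y',\pi(\nu))$ for every invariant $\nu$, compatibly with the affine identification $\pi$, so that the composition with the isomorphisms coming from Theorem~\ref{thm:main} is well-defined on every $p\in K$ simultaneously. If \cite{FH} provides this at the level of natural assignments (which its phrasing about "modeling faces as minimal systems" strongly suggests), the proof is just the composition described above; otherwise one would have to re-examine whether the minimalization procedure can be carried out so as to keep each fiber system isomorphic, which is the genuinely delicate verification.
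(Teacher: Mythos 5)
Your proposal is correct and is exactly the paper's argument: the authors prove Theorem~\ref{thm:main2} precisely by composing Theorem~\ref{thm:main} with Theorem~1.2 of \cite{FH}, which models the natural assignment of any zero-dimensional free $G$-system (here the system $Y$ produced by Theorem~\ref{thm:main}) by a minimal free $G$-system, so chaining the two assignment equivalences gives the result. Your caveat about needing \cite{FH} to preserve the full assignment (individual isomorphisms compatible with the affine homeomorphism) rather than just the simplex structure is the right thing to check, and it is indeed what Theorem~1.2 of \cite{FH} provides.
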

Secondly, note that the result is this paper is not strictly a strengthening of 
the main result in \cite{D2}, since while we gain the result for actions of 
amenable groups, we add the requirement that the action be free, whereas the 
original result merely requires that the face in question contain no periodic 
measures. Unfortunately, it is very much unclear how the machinery used to deal 
with periodic points would transfer to the group case, which is why the matter 
of directly 
extending the result in \cite{D2} remains open.

\end{document}